\newtheorem{theo}{Theorem}[section]
\newtheorem{lemm}[theo]{Lemma}
\newtheorem{cor}[theo]{Corollary}
\newtheorem{rem}[theo]{Remark}
\newcounter{c}
\newcounter{d}
\newcounter{b}
\newcommand{\cc}[1][]{\refstepcounter{c}#1\arabic{c}}
\newcommand{\res}{\mathop{\hbox{\vrule height 7pt width 
0.5pt depth 0pt \vrule height 0.5pt width 6pt depth 0pt}}\nolimits}
\newcommand{\Na}{\mathbb{N}} %naturali
\newcommand{\R}{\mathbb{R}} %reali
\newcommand{\e}{\varepsilon} %epsilon
\newcommand{\spt}{\mathrm{spt}} %support (usually of a measure)
\newcommand{\di}{\mathrm{div}} %divergence
\newcommand{\Ha}{\mathcal{H}} %Hausdorff measure
\renewcommand{\L}{\mathcal{L}} %Lebesgue measure
\title[Prescribed mean curvature equation]{The Dirichlet problem for a prescribed mean curvature equation}
\author{Yuki Tsukamoto}
\address{Department of Mathematics, Tokyo Institute of Technology,
	152-8551, Tokyo, Japan}
\email{tsukamoto.y.ag@m.titech.ac.jp}
\date{}
\begin{document}
\maketitle
\begin{abstract}
	We study a prescribed mean curvature problem where 
	we seek a surface whose mean curvature vector coincides with the normal 
	component of a given vector field. 
	We prove that the problem has a solution near a graphical minimal surface if 
	the prescribed vector field is sufficiently small in a dimensionally sharp 
	Sobolev norm.
\end{abstract}

	\section{Introduction}
	In this paper, we consider the following prescribed mean 
	curvature problem with the Dirichlet condition,
 \begin{align}
	\begin{cases}
		\di\left( \frac{\nabla u}{\sqrt{1+|\nabla u|^2}} \right) =
		H(x, u(x), \nabla u(x))  \quad
		\mathrm{in} \  \Omega, \\
		u= \phi \quad \mathrm{on} \  \partial \Omega, \label{it2}
	\end{cases}
\end{align}
	where $\Omega $ is a bounded domain in $\R^n$.
  The function $H(x,t,z): \R^{n} \times \R \times  \R^{n} \to \R$ is given 
  and we seek a solution $u$ satisfying \eqref{it2}. Since the left-hand
  side is the mean curvature of the graph of $u$, \eqref{it2} is a prescribed
  mean curvature equation whose prescription depends on the location of the graph
  as well as the slope of the tangent space.

	Prescribed mean curvature problems in a wide variety of formulation have been 
	studied by numerous researchers. 
	In the most classical case of $H=H(x)$, (\ref{it2}) has a solution
	if $H$ and $\phi$ have a suitable regularity and
	the mean curvature of $\partial \Omega$ satisfies a certain geometric condition 
	(see \cite{GM74, G83,H00, J68, S69}, for example).
	Giusti \cite{G78} determined a necessary and sufficient condition that 
	a prescribed mean curvature problem without boundary conditions has solutions.
	In the case of $H=H(x,t)$, Gethardt \cite{GC74} constructed $H^{1,1}$ solutions, and
	Miranda \cite{M74} constructed BV solutions.
	In those papers, assumptions of the boundedness $|H|< \infty$
	and the monotonicity $\frac{\partial H}{\partial t} \geq 0$ play an important role.
	If $|H|< \Gamma$ where $\Gamma$ is determined by $\Omega$, there exist solutions, and
	the uniqueness of solutions is guaranteed by the monotonicity, that is, $\frac{\partial H}{\partial t} \geq 0$. 
	Under the assumptions of boundedness, monotonicity and the convexity of $\Omega$, Bergner \cite{B08} solved the Dirichlet problem
	 in the case of $H=H(x, u, \nu(\nabla u) )$ using the Leray-Schauder  fixed point theorem.
	 Here, $\nu$ is the unit normal vector of $u$, that is,
	 $\nu(z)=\frac{1}{\sqrt{1+|z|^2}} (z, -1)$.
	 For the same problem as \cite{B08}, 
	 Marquardt \cite{M10} gave a condition on $\partial\Omega$ depending on $H$
	 which guarantees the existence of solution even for non-convex domain $\Omega$. 
	 
The motivation of the present paper comes from a singular perturbation problem
studied in \cite{T19}, where one considers the following problem on a domain $\tilde \Omega\subset
\mathbb R^{n+1}$,
\begin{align}
-\varepsilon\Delta \phi_{\varepsilon}+\frac{W'(\phi_{\varepsilon})}{\varepsilon}=\varepsilon\nabla\phi_{\varepsilon}\cdot f_{\varepsilon}.
\end{align}
Here, $W$ is a double-well potential, for example $W(\phi)=(1-\phi^2)^2$ and 
$\{f_{\varepsilon}\}_{\varepsilon>0}$ are given vector fields uniformly bounded in the Sobolev norm of 
$W^{1,p}(\tilde\Omega)$, $p>\frac{n+1}{2}$. In \cite{T19}, we proved under a natural assumption 
\begin{align}
\int_{\tilde \Omega}\big(\frac{\varepsilon|\nabla\phi_{\varepsilon}|^2}{2}
+\frac{W(\phi_{\varepsilon})}{\varepsilon}\big)\,dx+
\|f_{\varepsilon}\|_{W^{1,p}(\tilde\Omega)}\leq C
\end{align}
that the interface
$\{\phi_{\varepsilon}=0\}$ converges locally in the Hausdorff distance to a suface whose mean curvature $H$
is given by $f\cdot \nu$ as $\varepsilon\rightarrow 0$. Here, $f$ is the 
weak $W^{1,p}$ limit of $f_{\varepsilon}$. If the surface is represented locally as a graph of a function $u$
over a domain $\Omega\subset\mathbb R^{n}$, the corresponding relation between
the mean curvature and the vector field is expressed as
\begin{align}
\di\left( \frac{\nabla u}{\sqrt{1+|\nabla u|^2}} \right) =\nu(\nabla u(x)) \cdot f(x,u(x))  \quad
\mathrm{in} \  \Omega, \label{qe1}
\end{align}
  where $f \in W^{1,p}(\Omega \times \R ; \R^{n+1})$ with $p>\frac{n+1}{2}$.
  Note that $f$ is not bounded in $L^{\infty}$ in general, unlike the 
  cases studied in \cite{B08,M10}. In this paper, we establish the well-posedness
  of the perturbative problem including \eqref{qe1} which has a $W^{1,p}$ norm control on the right-hand
  side of the equation. 
  The following theorem is the main result of this paper.
 
  \begin{theo} \label{mt2}
  	Let $\Omega$ be a $C^{1,1}$ bounded domain in $\R^n$ and fix constants $\e>0$, 
	$\frac{n+1}{2}<p< n+1$ and $q=\frac{np}{n+1-p}$. Suppose that $h \in W^{2,\infty}(\Omega)$ satisfies
  	the minimal surface equation, that is,
  	\begin{align}
  		\di\left( \frac{\nabla h}{\sqrt{1+|\nabla h|^2}} \right) =0. \label{t6e1a}
  	\end{align}
  	Then there exists a constant $\delta_1>0$ which depends only on $n$, $p$, $\Omega$,
  	$\|h\|_{W^{2,\infty}(\Omega)}$ and $\e$ with the following property.
Suppose that 
  	$G \in W^{1,p}(\Omega \times \R )$ and 
  	$\phi \in W^{2,q}(\Omega)$
	satisfy
  	\begin{align}
  		\|G\|_{W^{1,p}(\Omega \times \R )} +\|\phi \|_{W^{2,q}(\Omega)} \leq \delta_1, \label{qe2}
  	\end{align}
   and a measurable function $H(x,t,z):\R^n \times \R \times \R^{n}
  	\to \R$ is such that 
  	$H(x,\cdot,\cdot)$ is a continuous function for a.e.~$x \in \Omega$, and
	for all $ (t,z) \in \R \times \R^{n}$,
  	\begin{align}
  		|H(x,t,z)| \leq |G(x,t)| \quad \mathrm{for} \   a.e. \ x \in \Omega.
  	\end{align} 
  	Then, there exists a function $u \in W^{2,q}(\Omega)$ 
  	such that $ u-h-\phi \in W^{1,q}_0(\Omega)$ and
  	\begin{align}
  		\di\left( \frac{\nabla u}{\sqrt{1+|\nabla u|^2}} \right)& =
  		H(x, u(x), \nabla u(x))  \quad
  		\mathrm{in} \  \Omega, \label{t6e1d} \\
  		\|u-h\|_{W^{2,q}(\Omega)}&<\e. \label{t6e1e}
  	\end{align}
  \end{theo}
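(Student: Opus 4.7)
The approach is to recast \eqref{t6e1d} as a fixed point problem for the perturbation $w:=u-h-\phi\in W^{2,q}(\Omega)\cap W^{1,q}_0(\Omega)$, in which the boundary condition is automatically encoded. Writing $M(u):=\di(\nabla u/\sqrt{1+|\nabla u|^2})$ and using $M(h)=0$, a Taylor expansion around $h$ gives
\begin{align*}
M(h+\phi+w)=L(\phi+w)+Q(\phi+w),
\end{align*}
where
\begin{align*}
Lv:=\partial_i\bigl(a^{ij}(\nabla h)\partial_j v\bigr),\qquad a^{ij}(z):=\frac{\delta_{ij}}{\sqrt{1+|z|^2}}-\frac{z_iz_j}{(1+|z|^2)^{3/2}},
\end{align*}
is the linearization at $h$, uniformly elliptic with coefficients in $W^{1,\infty}(\Omega)$ because $h\in W^{2,\infty}(\Omega)$, and $Q$ is a smooth remainder vanishing to second order at $0$. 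The equation \eqref{t6e1d} then becomes
\begin{align*}
Lw=-L\phi-Q(\phi+w)+H(x,h+\phi+w,\nabla h+\nabla\phi+\nabla w),\qquad w\in W^{1,q}_0(\Omega).
\end{align*}

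Classical $L^q$ elliptic regularity on the $C^{1,1}$ domain $\Omega$ furnishes a bounded solution operator $L^{-1}:L^q(\Omega)\to W^{2,q}(\Omega)\cap W^{1,q}_0(\Omega)$. I define $T(w)$ to be $L^{-1}$ applied to the right-hand side above evaluated at $w$, and look for a fixed point in the closed ball $B_r:=\{w\in W^{2,q}(\Omega)\cap W^{1,q}_0(\Omega):\|w\|_{W^{2,q}(\Omega)}\leq r\}$ for some $r<\e$ to be chosen.

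The critical technical input is a composition estimate: if $G\in W^{1,p}(\Omega\times\R)$ and $u\in W^{2,q}(\Omega)$, then $x\mapsto G(x,u(x))$ lies in $L^q(\Omega)$ with norm controlled by $\|G\|_{W^{1,p}(\Omega\times\R)}$ and the Lipschitz norm of $u$. Because $p>\tfrac{n+1}{2}$ forces $q>n$, the embedding $W^{2,q}(\Omega)\hookrightarrow C^{1,\alpha}(\overline\Omega)$ makes the graph of each admissible $u$ a Lipschitz hypersurface, and the trace theorem realizes $G$ on that graph as an element of $W^{1-1/p,p}$; the Sobolev embedding $W^{1-1/p,p}\hookrightarrow L^q$ on an $n$-dimensional Lipschitz domain yields precisely the exponent $q=\frac{np}{n+1-p}$. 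Applied to $H$ via $|H|\leq|G|$, this returns the nonlinear source into $L^q(\Omega)$. Combined with $\|L\phi\|_{L^q}\lesssim\|\phi\|_{W^{2,q}}$ and $\|Q(\phi+w)\|_{L^q}=O\bigl((\|\phi\|_{W^{2,q}}+\|w\|_{W^{2,q}})^2\bigr)$ (using $C^{1,\alpha}$ control on gradients), the smallness hypothesis \eqref{qe2} suffices to make $T$ a self-map of $B_r$ once $\delta_1$ is taken small.

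Because $H$ is only continuous in $(t,z)$ and not necessarily Lipschitz, I would close the argument with Schauder's rather than Banach's fixed point theorem: sequential continuity of $T$ on $B_r$ follows from the continuity of $H$ in $(t,z)$ together with the pointwise bound $|H|\leq|G|$ and dominated convergence through the composition estimate, while compactness of $T(B_r)$ is supplied by the compact embedding $W^{2,q}\hookrightarrow W^{1,q}$. The principal obstacle is the composition/trace estimate, especially its uniformity as the graph of $u$ varies over $B_r$ and the corresponding sequential continuity of $w\mapsto H(\cdot,h+\phi+w,\nabla h+\nabla\phi+\nabla w)$ into $L^q$; this is exactly where the dimensionally sharp exponent $q=\frac{np}{n+1-p}$ and the hypothesis $p>\tfrac{n+1}{2}$ play their decisive role.
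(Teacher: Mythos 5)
Your proposal is correct in substance but takes a genuinely different route from the paper, and there is one technical point that needs fixing.

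\textbf{Comparison.} The paper does not linearize the minimal surface operator at $h$. Instead it freezes the full non-divergence coefficients $A_{ij}(\nabla v+\nabla h)$ at the current iterate $v$ and, using $\di(\nabla h/\sqrt{1+|\nabla h|^2})=0$, absorbs all $h_{x_ix_j}$ terms into a first-order drift $B(\nabla v)\cdot\nabla w$, so that each iterate solves a linear, $v$-dependent equation (Theorem \ref{th1} applied with coefficients depending on $v$). You instead Taylor-expand at $h$, invert a \emph{fixed} divergence-form operator $L$ with coefficients $a^{ij}(\nabla h)\in W^{1,\infty}$, and push the difference into a quadratic remainder $Q$; the quadratic estimate $\|Q(\phi+w)\|_{L^q}\lesssim(\|\phi\|_{W^{2,q}}+\|w\|_{W^{2,q}})^2$ via the $C^{1,\alpha}$ embedding is correct. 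Your version of the key composition estimate (Lemma \ref{lm3} in the paper) also differs: the paper bounds the graph's $n$-dimensional density and invokes the Radon-measure inequality of Ziemer (Theorem \ref{th2}) to get from $W^{1,p}(\R^{n+1})$ to $L^q$ on the graph; you instead propose trace onto a Lipschitz graph into $W^{1-1/p,p}(\Omega)$ followed by the fractional Sobolev embedding $W^{1-1/p,p}(\Omega)\hookrightarrow L^{np/(n+1-p)}(\Omega)$. Both give the same sharp exponent; the paper's route is a bit more self-contained, yours is the more standard trace-theoretic argument and is equally valid provided the Lipschitz constant of the graph is controlled uniformly over the iteration ball (which it is, by the $C^{1,\alpha}$-control you impose).

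\textbf{Gap to fix.} You set up the fixed point on the closed ball $B_r$ in $W^{2,q}\cap W^{1,q}_0$ and ask for Schauder's theorem, but $T(B_r)$ is not relatively compact in $W^{2,q}$: the elliptic inversion $L^{-1}$ does not gain beyond $W^{2,q}$. Your passing remark about $W^{2,q}\hookrightarrow\hookrightarrow W^{1,q}$ gestures at the cure but does not implement it. You should rerun the fixed point in a strictly weaker norm, as the paper does with the ball $\mathcal A=\{\|v\|_{C^{1,\frac12-\frac{n}{2q}}(\bar\Omega)}\le\e\}$: the $W^{2,q}$ a priori bound (from the elliptic estimate plus the composition lemma plus smallness) makes $T(\mathcal A)$ lie in a fixed $W^{2,q}$-ball, which is compact in $C^{1,\frac12-\frac{n}{2q}}$ since $\frac12-\frac{n}{2q}<1-\frac nq$. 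Continuity of $T$ in that topology then follows from continuity of $H$ in $(t,z)$, the dominated convergence argument through $|H|\le|G|$, and the fact that weak-$W^{2,q}$ plus strong-$C^1$ convergence identifies the limit equation, exactly as in the paper's proof of Theorem \ref{mt1}. With that adjustment your argument closes.
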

 The claim proves that there exists a solution of (\ref{it2}) in a neighbourhood 
 of any minimal surfaces 
  if $H$ and $\phi$ are sufficiently small in these norms. In particular, if we
  take $H(x,t,z)=\nu(z)\cdot f(x,t)$ and $G(x,t)=|f(x,t)|$, where $\|f\|_{W^{1,p}
  (\Omega\times\R)}$ is sufficiently small, above conditions on $G$ and $H$ are satisfied and
  we can guarantee the existence of a solution for \eqref{it2} nearby the given
  minimal surface (see Corollary \ref{maincor}).  
The method of proof is as follows.
We prove that linear elliptic equations have a unique solution in $W^{2,q}(\Omega)$ and
the norm of this solution is controlled by $G$ and $\phi$. When (\ref{qe2}) is
satisfied, there exist a suitable function space $\mathcal{A}$ and a mapping 
$T:\mathcal{A}\to \mathcal{A}$, and a fixed point of $T$ is a solution 
of Theorem \ref{mt2}. We show that $T$ satisfies assumptions of the Leray-Schauder fixed point theorem, and Theorem \ref{mt2} follows.
  	
	\section{Proof of Theorem \ref{mt2}}
	Throughout the paper, $\Omega$ is a bounded domain in $\R^n$ with $C^{1,1}$ boundary
	$\partial \Omega$.
We define functions $A_{ij}: \R^n \to \R$ ($i,j=1,\ldots,n$)
\[
A_{ij}(z) := \frac{1}{\sqrt{1+|z|^2}}\left( \delta_{ij}-\frac{z_i  z_j}{1+|z|^2}\right)
\]
and the operator
\[
L[z](u) :=A_{ij}(z) u_{x_i x_j}(x) \quad \mathrm{for} 
\ \mathrm{any} \ u\in W^{2,1}(\Omega),
\]
where we omit the summation over $i,j=1,\ldots,n$. 
By the Cauchy--Schwarz inequality, for any $\xi \in \R^n$,
\begin{align}
	A_{ij}(z)\xi_i \xi_j  &=\frac{1}{\sqrt{1+|z|^2}}\left( \delta_{ij}-\frac{z_i z_j}{1+|z| ^2}\right) \xi_i \xi_j \nonumber \\
	&=  \frac{1}{\sqrt{1+|z|^2}}\left[ \xi_i^2  -\left( \frac{z_i}{\sqrt{1+|z|^2}} \xi_i \right)^2 \right] \nonumber\\
	&\geq \frac{1}{\sqrt{1+|z|^2}}\left[|\xi|^2 -\left( \frac{|z|^2}{1+|z|^2}\right)|\xi|^2
\right] \nonumber \\
	&=\frac{1}{(1+|z|^2)^\frac{3}{2}} |\xi|^2. \label{t0e1}
\end{align}
Hence, as is well-known, the operator $L[z]$ is elliptic.

\begin{theo} 	\label{th1}
	Suppose that we are given $v \in C^{1,\alpha}(\bar{\Omega})$ where $0<\alpha<1$, $f \in L^q(\Omega)$ and
	$\phi \in W^{2,q}(\Omega)$ where $q>n$.
	Then there exists a unique function $u \in W^{2,q}(\Omega)$ such that
	\begin{align}
	\begin{cases}
	L[\nabla v](u) =f(x) \quad
	\mathrm{in} \  \Omega, \\
	u-\phi \in W^{1,q}_0(\Omega). \label{t1e1}
	\end{cases}
	\end{align}
	Moreover, there exists a constant $c_{\cc \label{t1c1}}$ which depends only on $n$, $q$, $\Omega$ and $\| v \|_{C^{1,\alpha}(\bar{\Omega})}$ such that
	\begin{align}
	\| u\|_{W^{2,q}(\Omega)} \leq c_{\ref{t1c1}}(\|f\|_{L^q(\Omega)}
	+\|\phi\|_{W^{2,q}(\Omega)}) . \label{t1e2}
	\end{align}
\end{theo}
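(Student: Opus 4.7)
The equation $A_{ij}(\nabla v)\,u_{x_i x_j}=f$ is a linear second–order equation in non–divergence form whose coefficients $a_{ij}(x):=A_{ij}(\nabla v(x))$ inherit $C^{0,\alpha}(\bar\Omega)$ regularity from $v\in C^{1,\alpha}(\bar\Omega)$ and the smoothness of $A_{ij}(z)$ in $z$. The bound on $\|\nabla v\|_{L^\infty}$ together with the ellipticity computation \eqref{t0e1} yields a uniform ellipticity constant controlled by $\|v\|_{C^{1,\alpha}(\bar\Omega)}$. Thus the theorem is essentially a repackaging of the classical Calder\'on--Zygmund $L^q$ theory for linear elliptic operators with continuous coefficients on $C^{1,1}$ domains (Gilbarg--Trudinger, Chapter 9).

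The plan proceeds in three steps. First, reduce to homogeneous Dirichlet data: set $w:=u-\phi$, so that the problem becomes $L[\nabla v](w) = f-A_{ij}(\nabla v)\phi_{x_i x_j}=:\tilde f$ with $w\in W^{1,q}_0(\Omega)$. Since $\phi\in W^{2,q}(\Omega)$ and the $a_{ij}$ are bounded, $\tilde f\in L^q(\Omega)$ with
\[
\|\tilde f\|_{L^q(\Omega)}\le C\bigl(\|f\|_{L^q(\Omega)}+\|\phi\|_{W^{2,q}(\Omega)}\bigr),
\]
where $C$ depends on $\|v\|_{C^{1,\alpha}(\bar\Omega)}$. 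Second, invoke the solvability result for equations with continuous coefficients (Gilbarg--Trudinger Theorem 9.15) to obtain a unique $w\in W^{2,q}(\Omega)\cap W^{1,q}_0(\Omega)$ solving $L[\nabla v](w)=\tilde f$, together with the a priori estimate
\[
\|w\|_{W^{2,q}(\Omega)}\le C\bigl(\|w\|_{L^q(\Omega)}+\|\tilde f\|_{L^q(\Omega)}\bigr)
\]
(Theorem 9.13), where the constant $C$ depends only on $n,q,\Omega$, the ellipticity constants and the modulus of continuity of the $a_{ij}$, hence only on the quantities allowed in the statement.

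Third, absorb the $\|w\|_{L^q}$ term. Since $L[\nabla v]$ has no zero–order coefficient, the Alexandrov--Bakelman--Pucci maximum principle gives
\[
\|w\|_{L^\infty(\Omega)}\le C\,\diam(\Omega)\,\|\tilde f\|_{L^n(\Omega)}\le C\,\|\tilde f\|_{L^q(\Omega)},
\]
where the last inequality uses $q>n$ and boundedness of $\Omega$. Combining with the previous estimate yields $\|w\|_{W^{2,q}(\Omega)}\le C\|\tilde f\|_{L^q(\Omega)}$, and rewriting in terms of $u=w+\phi$ produces \eqref{t1e2}. Uniqueness in $W^{2,q}(\Omega)$ is also immediate from ABP applied to the difference of two solutions.

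The main technical point is not a new argument but the careful identification of the classical theorems applicable to our setting: verifying that the coefficients $A_{ij}(\nabla v)$ are continuous with a modulus determined by $\|v\|_{C^{1,\alpha}(\bar\Omega)}$, that ellipticity is uniform with constant depending only on $\|\nabla v\|_{L^\infty}$, and that $\partial\Omega\in C^{1,1}$ is exactly the regularity required for the global $W^{2,q}$ theory on the boundary. Once these checks are in place, existence, uniqueness, and the estimate follow from the cited results with no additional work.
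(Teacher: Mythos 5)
Your proposal is correct and follows essentially the same route as the paper's proof: verify uniform ellipticity and H\"older continuity of $A_{ij}(\nabla v)$, invoke Gilbarg--Trudinger Theorems 9.15 and 9.13 for existence/uniqueness and the $W^{2,q}$ a priori estimate, and remove the $\|u\|_{L^q}$ term via the Alexandrov maximum principle. The only cosmetic difference is that you reduce to homogeneous boundary data via $w=u-\phi$ first, whereas the paper keeps $u$ throughout and handles the boundary term directly inside the ABP estimate.
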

\begin{proof}
	By (\ref{t0e1}), for any $\xi \in \R^n$,
	\begin{align}
	A_{ij}(\nabla v) \xi_i \xi_j \geq  
	\frac{1}{(1+\| v \|_{C^{1,\alpha}(\bar{\Omega})}^2)^\frac{3}{2}} |\xi|^2
	=: \lambda |\xi|^2. \label{t1e2a}
	\end{align}
where the constant $\lambda$ depends only on $\| v \|_{C^{1,\alpha}(\bar{\Omega})}$.
    Since each $A_{ij}$ is a smooth function of $\nabla v$, there exists a constant $\Lambda$ which depends 
    only on $\| v \|_{C^{1,\alpha}(\bar{\Omega})}$ such that
    \begin{align}
    \| A_{ij}(v) \|_{C^{0,\alpha}(\bar{\Omega})} \leq  \Lambda
    \quad \mathrm{for \  all \ } i,j \in \{ 1, \cdots ,n \}. \label{t1e2b}
    \end{align}
By (\ref{t1e2a}) and (\ref{t1e2b}),
there exists a unique solution $u \in W^{2,q}(\Omega) $ satisfying (\ref{t1e1}) using \cite[Theorem 9.15]{G83}.
	Using \cite[Theorem 9.13]{G83}, there exists a constant  $c_{\cc 
		\label{t1c2}}$ which depends only on $n$, $q$, $\Omega$, $\lambda$ and $\Lambda$ such that
	\begin{align}
	\| u\|_{W^{2,q}(\Omega)} \leq c_{\ref{t1c2}}(\|u\|_{L^q(\Omega)} + \|f\|_{L^q(\Omega)} +\|\phi\|_{W^{2,q}(\Omega)}). \label{t1e3}
	\end{align}
		Using the Aleksandrov maximum principle \cite[Theorem 9.1]{G83}, there exists a constant $c_{\cc \label{t1c3}}$ which depends only on $n$, $\Omega$ and $\lambda$ such that
	\begin{align}
	\|u\|_{L^\infty (\Omega)} &\leq \sup_{x \in {\partial \Omega}} |u| +c_{\ref{t1c3}}  \| f\|_{L^n(\Omega)} \nonumber \\
	&=\sup_{x \in {\partial \Omega}} |\phi| +c_{\ref{t1c3}}  \| f\|_{L^n(\Omega)}.
	 \label{t1e4}
	\end{align}
	By the H\"{o}lder and Sobolev inequalities,
	\begin{align}
	\|u\|_{L^q(\Omega)} &\leq c\|u\|_{L^\infty (\Omega)} \nonumber \\
	&\leq c( \sup_{x \in {\partial \Omega}} |\phi| +  \| f\|_{L^n(\Omega)})  
	\nonumber \\
	&\leq c( \|\phi \|_{L^\infty(\Omega)} +  \| f\|_{L^n(\Omega)})  
	\nonumber \\
	& \leq  c_{\ref{t1c3a}}(\|f\|_{L^q(\Omega)}+\|\phi\|_{W^{2,q}(\Omega)})  ,	\label{t1e5}
	\end{align}
	where $c_{\cc \label{t1c3a}}$ depends only on $n$, $q$ and $\Omega$.
	By (\ref{t1e3}) and (\ref{t1e5}), there exists a constant $c_{\ref{t1c1}}$ which depends only on $n$, $q$,
	 $\Omega$, $\lambda$ and $\Lambda$ such that
	\begin{align}
	\| u\|_{W^{2,q}(\Omega)} \leq c_{\ref{t1c1}} (\|f\|_{L^q(\Omega)}
	+\|\phi\|_{W^{2,q}(\Omega)}).
	\end{align}
	Thus this theorem follows.
\end{proof}

To proceed, we need the following theorem  (see \cite[Theorem 5.12.4]{WP72}).
\begin{theo} \label{th2}
	Let $\mu$ be a positive Radon measure on $\mathbb{R}^{n+1}$ satisfying
	\[
	K(\mu):=\sup_{B_r(x)\subset\R^{n+1}} \frac{1}{r^{n}} \mu(B_r(x))< \infty.
	\]
	Then there exists a constant $c(n)$ such that
	\[
	\left|\int_{\mathbb{R}^{n+1}} \phi  \,d\mu \right| \leq c(n) K(\mu) \int_{\mathbb{R}^{n+1}} |\nabla \phi| \,d\mathcal{L}^{n+1}
	\]
	for all $\phi \in C^1_c(\mathbb{R}^{n+1})$. \label{t2e1}
\end{theo}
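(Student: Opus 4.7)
The plan is to reduce the claimed inequality to two classical ingredients: an isoperimetric-type estimate for $\mu$ relating the $\mu$-mass of an open set to the $\mathcal{H}^n$-measure of its boundary, and the coarea formula. By splitting $\phi=\phi_+-\phi_-$, it suffices to treat the case $\phi\geq 0$. Cavalieri's principle then yields
\[
\int_{\R^{n+1}}\phi\,d\mu=\int_0^{\infty}\mu(\{\phi>t\})\,dt,
\]
so the task reduces to bounding $\mu(\{\phi>t\})$ in terms of the surface measure $\mathcal{H}^n(\{\phi=t\})$, which integrates to $\int|\nabla\phi|\,d\mathcal{L}^{n+1}$ by the coarea formula.

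The key lemma I would establish is: for every bounded open set $E\subset\R^{n+1}$ with rectifiable boundary,
\[
\mu(E)\leq c(n)\,K(\mu)\,\mathcal{H}^n(\partial E).
\]
To prove this, for each $x\in E$ I would choose a radius $r(x)$ comparable to $\dist(x,\partial E)$ so that the ball $B_{r(x)}(x)$ captures a piece of $\partial E$ whose $\mathcal{H}^n$-measure is at least a dimensional multiple of $r(x)^n$; this is achievable because near the closest boundary point to $x$, a $C^1$ hypersurface looks approximately flat, and the ball $B_{r(x)}(x)$ meets a disk-shaped patch of it of diameter $\sim r(x)$. Applying the Vitali $5r$-covering theorem to the family $\{B_{r(x)}(x)\}_{x\in E}$ produces a countable disjoint subfamily $\{B_{r_i}(x_i)\}$ whose $5$-enlargements still cover $E$. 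Then
\[
\mu(E)\leq\sum_i\mu(B_{5r_i}(x_i))\leq 5^n K(\mu)\sum_i r_i^n\leq c(n)\,K(\mu)\sum_i\mathcal{H}^n(B_{r_i}(x_i)\cap\partial E)\leq c(n)\,K(\mu)\,\mathcal{H}^n(\partial E),
\]
the last inequality using the pairwise disjointness of the $B_{r_i}(x_i)$.

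To conclude, for almost every $t>0$ Sard's theorem ensures $\nabla\phi\neq 0$ along $\{\phi=t\}$, so $\{\phi>t\}$ is a bounded open set whose boundary $\{\phi=t\}$ is a $C^1$ hypersurface; the isoperimetric lemma then yields $\mu(\{\phi>t\})\leq c(n)\,K(\mu)\,\mathcal{H}^n(\{\phi=t\})$. Integrating in $t$ and invoking the coarea formula
\[
\int_0^\infty\mathcal{H}^n(\{\phi=t\})\,dt=\int_{\R^{n+1}}|\nabla\phi|\,d\mathcal{L}^{n+1}
\]
produces the desired inequality. The main obstacle is the isoperimetric lemma: the covering construction requires a measurable choice of $r(x)$ that both delivers an efficient cover of $E$ and guarantees each ball captures sufficient boundary $\mathcal{H}^n$-measure with a dimensional constant that does not degenerate as the boundary's local flatness deteriorates. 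A standard technical workaround is to first establish the estimate on smoothly approximating domains with controlled boundary regularity and then pass to the limit using lower semicontinuity of $\mathcal{H}^n$ of the boundary.
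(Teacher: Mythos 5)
The paper does not actually prove Theorem~\ref{th2}; it simply cites \cite[Theorem~5.12.4]{WP72}, so there is no in-paper argument to compare against. Your overall route --- split into $\phi_{\pm}$, write $\int\phi\,d\mu=\int_0^\infty\mu(\{\phi>t\})\,dt$, bound $\mu(\{\phi>t\})$ by $K(\mu)\,\mathcal{H}^n(\partial\{\phi>t\})$, and integrate via the coarea formula --- is indeed the standard proof of this Meyers--Ziemer type trace inequality, and the ``isoperimetric lemma'' you isolate is exactly Gustin's boxing inequality, which is the mechanism used in the literature. So the skeleton is right.

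The gap is in your proof of that lemma, and it is a genuine one, not merely a technicality. You propose to cover $E$ by balls $B_{r(x)}(x)$ with $r(x)\sim\dist(x,\partial E)$ and claim that each such ball captures a patch of $\partial E$ with $\mathcal{H}^n(B_{r(x)}(x)\cap\partial E)\gtrsim r(x)^n$. This fails even when $\partial E$ is a $C^1$ hypersurface: if the complement of $E$ has a thin spike of width $\varepsilon\ll\rho$ pointing toward $x$, the nearest boundary point $y$ is the tip of the spike, but $\mathcal{H}^n(B_{2\rho}(x)\cap\partial E)\approx\varepsilon^{n-1}\rho\ll\rho^n$. The heuristic that ``near the closest boundary point a $C^1$ hypersurface looks approximately flat'' is only valid at scales small compared to the second-fundamental-form scale of $\partial E$, which is unrelated to $\dist(x,\partial E)$; and the relative isoperimetric inequality on $B_{2\rho}(x)$ also gives nothing here, because although $|E\cap B_{2\rho}|\gtrsim\rho^{n+1}$, there is no lower bound on $|B_{2\rho}\setminus E|$. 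The boxing inequality requires a more delicate covering in which the radius at each point is chosen by a stopping-time/volume-fraction condition (so that both $E$ and $E^c$ occupy a definite fraction of the ball before the relative isoperimetric inequality is invoked), together with a Vitali selection; one cannot simply take $r(x)$ comparable to the distance to the boundary. The reduction via Sard's theorem for $C^1$ functions is also not literally correct (Sard requires $C^{n+1}$ for maps $\R^{n+1}\to\R$), but as you note this is a fixable approximation issue; the covering step is the substantive hole.
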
 

\begin{lemm} \label{lm3}
Suppose that $v \in W^{1,\infty}(\Omega)$ with $\|v \|_{W^{1,\infty}(\Omega)} \leq V$ 
and $G \in W^{1, p}(\Omega \times \R)$ 
	where $\frac{n+1}{2} <p < n+1$. Suppose that $q=\frac{np}{n+1-p}(>n)$. Then there exists a constant $c_{\cc \label{t3c1}}$ which depends only on
	$n$, $p$, $\Omega$ and $V$ such that
	\begin{align}
		 \|G(\cdot,v(\cdot))\|_{L^{q}(\Omega)} \leq c_{\ref{t3c1}} 
			\|G\|_{W^{1,p}(\Omega \times  \R)}.
	\end{align}
\end{lemm}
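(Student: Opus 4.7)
The plan is to identify $G(\cdot,v(\cdot))$ with the restriction of $G$ to the Lipschitz graph $\Sigma:=\{(x,v(x)):x\in\Omega\}\subset\mathbb R^{n+1}$ and to convert the inequality into a trace-type estimate on $\Sigma$ handled by Theorem \ref{th2}. Set $\mu:=\mathcal H^n\res\Sigma$. Since $\|v\|_{W^{1,\infty}(\Omega)}\leq V$, the area formula gives $\int_\Omega|G(x,v(x))|^q\,dx\leq\int_{\mathbb R^{n+1}}|G|^q\,d\mu$, and projecting any ball in $\mathbb R^{n+1}$ vertically onto $\mathbb R^n$ yields $\mu(B_r(y))\leq c(V)\,r^n$, i.e.\ $K(\mu)\leq C(V)$ in the notation of Theorem \ref{th2}.

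Next I produce a valid test function. Because $\partial\Omega$ is $C^{1,1}$, $G$ admits a $W^{1,p}$-extension to $\mathbb R^{n+1}$ across the lateral boundary $\partial\Omega\times\mathbb R$; multiplying by a cutoff equal to $1$ on $\Omega\times[-V,V]$ produces $\tilde G\in W^{1,p}(\mathbb R^{n+1})$ with compact support, $\tilde G=G$ on $\Sigma$, and $\|\tilde G\|_{W^{1,p}(\mathbb R^{n+1})}\leq c(\Omega,V)\|G\|_{W^{1,p}(\Omega\times\mathbb R)}$. Approximating $\tilde G$ by $\phi_k\in C^1_c(\mathbb R^{n+1})$ and applying Theorem \ref{th2} to $|\phi_k|^q$, with the usual chain-rule justification via a regularisation $(\phi_k^2+\delta^2)^{q/2}$, $\delta\to 0$, gives
\[
\int_{\mathbb R^{n+1}}|\phi_k|^q\,d\mu\leq c(n,q)K(\mu)\int_{\mathbb R^{n+1}}|\phi_k|^{q-1}|\nabla\phi_k|\,d\mathcal L^{n+1}.
\]

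Hölder's inequality with exponents $p'=p/(p-1)$ and $p$ bounds the right-hand side by $\|\phi_k\|_{L^{(q-1)p'}(\mathbb R^{n+1})}^{q-1}\|\nabla\phi_k\|_{L^p(\mathbb R^{n+1})}$. The key dimensional coincidence is that a direct calculation yields $(q-1)p'=\frac{(n+1)p}{n+1-p}=p^{\ast}$, the Sobolev conjugate of $p$ in dimension $n+1$; this is exactly why the statement fixes $q=\frac{np}{n+1-p}$. The Gagliardo--Nirenberg--Sobolev inequality in $\mathbb R^{n+1}$, available because $p<n+1$, then controls $\|\phi_k\|_{L^{p^{\ast}}}$ by $c(n,p)\|\nabla\phi_k\|_{L^p}$, so combining everything and sending $k\to\infty$ produces
\[
\|G(\cdot,v(\cdot))\|_{L^q(\Omega)}\leq c(n,p,\Omega,V)\,\|G\|_{W^{1,p}(\Omega\times\mathbb R)}.
\]

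The main difficulty is essentially bookkeeping: verifying that the extension, cutoff, approximation, and chain-rule steps all produce constants depending only on $n$, $p$, $\Omega$ and $V$. Conceptually the lemma is a direct combination of Ziemer's $n$-dimensional isoperimetric inequality (Theorem \ref{th2}) with the $(n+1)$-dimensional Sobolev embedding, with exponents tuned to match the codimension-one scaling of a graph.
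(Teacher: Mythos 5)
Your proposal is correct and follows essentially the same route as the paper: restrict to the Lipschitz graph, verify the $n$-dimensional density bound for $\mathcal H^n\res\Sigma$, extend $G$ to a compactly supported $W^{1,p}$ function on $\mathbb R^{n+1}$, apply Theorem \ref{th2} to $|\tilde G|^q$, and close with H\"older plus the $(n+1)$-dimensional Sobolev embedding, where $(q-1)p'=p^\ast$. You merely spell out two steps the paper leaves implicit (the chain-rule regularisation for $|\cdot|^q$ and the identification of $(q-1)p'$ with the Sobolev conjugate), and your extension/cutoff construction is an equivalent variant of the paper's extension from $\Omega\times(-2V,2V)$.
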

\begin{proof}
	Define
	\[
	\Gamma := \{(x,v(x)) \in \Omega \times \R \}.
	\]
	A set $B^{n}_r(x)$ is the open ball with center $x$ and radius $r$ in $\R^n$.
	In the following, $\Ha^n$ denotes the $n$-dimensional Hausdorff measure in 
	$\R^{n+1}$ and 
	$\Ha^{n} \res_{\Gamma}$ is a Radon measure defined by
	\[
	\Ha^{n} \res_{\Gamma}(A):= \Ha^{n} (A\cap \Gamma) \quad 
	\mathrm{for \ all \ } A\subset \R^{n+1}.
	\]
	Then the support satisfies in particular 
	$\spt \Ha^{n} \res_{\Gamma} \subset \Omega \times (-2V,2V)$.
	For any $B^{n+1}_r((x_0,x'_0))  \subset \R^{n+1}$ where $(x_0,x'_0) \in \R^n \times \R$,
	\begin{align}
		\frac{1}{r^{n}} \Ha^{n} \res_{\Gamma}(B^{n+1}_r((x_0,x'_0))) \leq 
		\frac{1}{r^{n}} \int_{B^{n}_r(x_0) \cap \Omega} \sqrt{1+ |\nabla v|^2} \ d \L^n \leq 
		(1+V)\omega_n .\label{t3e1}
	\end{align}
	Using the standard Extension Theorem, there exists a function
	$\tilde{G} \in W^{1, p}_0(\R^{n+1} )$ such that $\tilde{G}=G$ in $\Omega\times  (-2V,2V)$ and
	\begin{align}
		\|\tilde{G}\|_{W^{1,p}(\R^{n+1})} \leq c_{ \ref{t3c2}}
		\|G\|_{W^{1,p}(\Omega \times  (-2V,2V))},
	\end{align}
	where $c_{\cc \label{t3c2}}$ depends only on $n$, $p$, $\Omega$ and $V$.
	By Theorem \ref{th2} and smoothly approximating $\tilde{G}$,
	\begin{align}
		\int_{\Omega} |G(x,v(x))|^q & \leq \int_{\Omega} |\tilde{G}(x,v(x))|^q \sqrt{1+|\nabla v|^2} \nonumber \\ 
		&=   \int_{\Gamma}  |\tilde{G}(x,x_{n+1})|^q \ d\Ha^n \nonumber \\
		&\leq c(n,V) \int_{\R^{n+1}} |\nabla\tilde{G}| |\tilde{G}|^{q-1} \ d \L^{n+1}  \nonumber \\
		& \leq c(n,p,V) \|\nabla \tilde{G}\|_{L^p(\R^{n+1} )}
		\|\tilde{G}\|_{W^{1,p}(\R^{n+1} )}^{q-1} \nonumber \\
		&  \leq c(n,p,V) c_{\ref{t3c2}} \|G\|^q_{W^{1,p}(\Omega \times (-2V,2V))} \nonumber \\
		&\leq c(n,p,V) c_{\ref{t3c2}} \|G\|^q_{W^{1,p}(\Omega \times \R)}. \label{t3e2}
	\end{align}
	This lemma follows.
\end{proof}

We write the Leray-Schauder fixed point theorem needed later ( \cite[Theorem 11.3]{G83}).
\begin{theo} \label{th4} %theo11.3
Let $T$ be a compact and continuous mapping of a Banach space $\mathcal{B}$ into itself,
and suppose that there exists a constant $M$ such that
\[
\|u \|_\mathcal{B}<M
\]
for all $u \in \mathcal{B}$. Then $T$ has a fixed point.
\end{theo}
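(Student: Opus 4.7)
The plan is to reduce the statement to Schauder's fixed point theorem via a radial cut-off. I read the hypothesis in the standard Leray--Schauder form, namely that the a priori bound $\|u\|_{\mathcal{B}} < M$ is meant to hold for all $u \in \mathcal{B}$ and all $\sigma \in [0,1]$ satisfying $u = \sigma T u$ (as literally written the hypothesis would force $\mathcal{B}$ to be trivial, so I take the conventional correction matching \cite[Theorem 11.3]{G83}). The overall strategy is then: locate a closed bounded convex subset of $\mathcal{B}$ that a modified map sends into itself; apply Schauder to that modified map; and use the a priori bound to promote the resulting fixed point to a genuine fixed point of $T$.

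First I would set $K := \{u \in \mathcal{B} : \|u\|_{\mathcal{B}} \leq M\}$ and define the radial retraction $R : \mathcal{B} \to K$ by $R(v) = v$ when $\|v\|_{\mathcal{B}} \leq M$ and $R(v) = M v / \|v\|_{\mathcal{B}}$ otherwise. The map $R$ is continuous (in fact $1$-Lipschitz) and sends the whole space into $K$. Setting $T^{*} := R \circ T : K \to K$, continuity of $T^{*}$ follows from continuity of $T$ and $R$, while compactness of $T^{*}$ follows from compactness of $T$ together with the fact that $R$ maps precompact sets to precompact sets.

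Next I would invoke Schauder's fixed point theorem for $T^{*} : K \to K$, using that $K$ is closed, bounded, and convex in the Banach space $\mathcal{B}$. This yields $u^{*} \in K$ with $u^{*} = T^{*} u^{*} = R(T u^{*})$. To finish, I would split into two cases. If $\|T u^{*}\|_{\mathcal{B}} \leq M$, then $R(T u^{*}) = T u^{*}$ and $u^{*}$ is the desired fixed point of $T$. Otherwise $u^{*} = M T u^{*} / \|T u^{*}\|_{\mathcal{B}}$, which rewrites as $u^{*} = \sigma T u^{*}$ with $\sigma := M / \|T u^{*}\|_{\mathcal{B}} \in (0,1)$; in this case $\|u^{*}\|_{\mathcal{B}} = M$, contradicting the strict inequality in the hypothesis applied to $(u^{*},\sigma)$.

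The main obstacle is the invocation of Schauder's theorem itself, which in turn rests on Brouwer's theorem through a finite-dimensional approximation: one covers the precompact set $\overline{T^{*}(K)}$ by finitely many small balls, uses a partition-of-unity projection onto the convex hull of the centers to produce a continuous map into a finite-dimensional convex compact set, applies Brouwer there, and passes to a limit in the approximating fixed points. This is the real topological content of the argument; the radial cut-off and the case analysis above are essentially soft manipulations once Schauder is in hand.
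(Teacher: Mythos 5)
Your proof is correct, but note that the paper does not prove this statement at all: it is quoted verbatim (with a typo) as \cite[Theorem 11.3]{G83}, so there is no internal argument to compare against. You were right to flag that the hypothesis as printed --- ``$\|u\|_{\mathcal{B}}<M$ for all $u\in\mathcal{B}$'' --- is a misstatement that would force $\mathcal{B}=\{0\}$; the intended condition, which is what the source states and what the paper actually uses in Theorems 2.5 and 1.1 (where the bound \eqref{t5e4}--\eqref{t5e5} is an a priori estimate on the images $T(v)$, uniform over the relevant family), is that $\|u\|_{\mathcal{B}}<M$ for all $u$ and $\sigma\in[0,1]$ with $u=\sigma Tu$. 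Your argument --- radial retraction onto the closed ball of radius $M$, Schauder's theorem for the truncated map $T^{*}=R\circ T$, and the dichotomy at the fixed point --- is exactly the standard proof given in Gilbarg--Trudinger, and the case analysis correctly extracts the contradiction from the strict inequality when $\|Tu^{*}\|_{\mathcal{B}}>M$. One small inaccuracy: the radial retraction $R$ is not $1$-Lipschitz in a general Banach space (it is $2$-Lipschitz; the constant $1$ requires an inner product), but since your argument only uses continuity of $R$ and the fact that it preserves precompactness, this does not affect the proof.
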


We first prove Theorem \ref{mt2} in the case that $h=0$. 
 \begin{theo} \label{mt1}
	Assume that 
	$G \in W^{1,p}(\Omega \times \R )$ with $\frac{n+1}{2}<p< n+1$ and
	$\phi \in W^{2,q}(\Omega)$ with $q=\frac{np}{n+1-p}$.
	Then there exists a constant $\delta_2>0$ which depends only on $n$, $p$ and $\Omega$
	such that, if
	\begin{align}
	\|G\|_{W^{1,p}(\Omega \times \R )} +\|\phi \|_{W^{2,q}(\Omega)} \leq \delta_2,
	\end{align}
	then, for any measurable function $H(x,t,z):\R^n \times \R \times \R^{n}
	\to \R$ such that 
	$H(x,\cdot,\cdot)$ is a continuous function for a.e.~$x \in \Omega$ and
	\begin{align}
	|H(x,t,z)| \leq |G(x,t)| \quad \mathrm{for} \   a.e. \ x \in \Omega, \ 
	\mathrm{any} \  (t,z) \in \R \times \R^{n}, \label{t00e1}
	\end{align} 
	there exists a function $u \in W^{2,q}(\Omega)$ 
	such that $ u-\phi \in W^{1,q}_0(\Omega)$ and
	\begin{align}
	\di\left( \frac{\nabla u}{\sqrt{1+|\nabla u|^2}} \right) =
	H(x, u(x), \nabla u(x))  \quad
	\mathrm{in} \  \Omega. \label{t5e1}
	\end{align}
\end{theo}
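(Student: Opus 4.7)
The plan is to apply the Leray--Schauder fixed point theorem (Theorem \ref{th4}) to an operator $T$ whose fixed points solve \eqref{t5e1}. Fix $\alpha \in (0, 1 - n/q)$, which is possible since $q > n$, and let $\mathcal{B} := C^{1,\alpha}(\bar{\Omega})$; by Morrey's inequality $W^{2,q}(\Omega) \hookrightarrow C^{1, 1 - n/q}(\bar{\Omega}) \hookrightarrow \mathcal{B}$, the second embedding being compact. Given $v \in \mathcal{B}$, define $T(v) := u$, where $u \in W^{2,q}(\Omega)$ is the unique solution supplied by Theorem \ref{th1} of
\[
L[\nabla v](u) = H(x, v(x), \nabla v(x)) \ \text{in} \ \Omega, \qquad u - \phi \in W^{1,q}_0(\Omega).
\]
Solvability requires the right-hand side to lie in $L^q(\Omega)$, which follows from $|H| \leq |G|$ together with Lemma \ref{lm3}; a fixed point of $T$ solves \eqref{t5e1}. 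Combining Theorem \ref{th1}, Lemma \ref{lm3}, and the Sobolev embedding, for $\|v\|_{\mathcal{B}} \leq V$ one obtains
\[
\|T(v)\|_{\mathcal{B}} \leq C(V)\bigl(\|G\|_{W^{1,p}(\Omega\times\R)} + \|\phi\|_{W^{2,q}(\Omega)}\bigr) \leq C(V)\,\delta_2,
\]
where $C(V)$ depends only on $n$, $p$, $\Omega$, and $V$. Fixing $M := 1$ and choosing $\delta_2 > 0$ (depending only on $n, p, \Omega$) small enough that $C(M)\,\delta_2 < M$, the map $T$ sends the closed convex ball $K := \{v \in \mathcal{B} : \|v\|_{\mathcal{B}} \leq M\}$ strictly into itself.

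I then verify the hypotheses of Theorem \ref{th4}. Compactness of $T|_K$ is immediate, since $T(K)$ is bounded in $W^{2,q}(\Omega)$, which embeds compactly into $\mathcal{B}$. For continuity, if $v_k \to v$ in $\mathcal{B}$, then $A_{ij}(\nabla v_k) \to A_{ij}(\nabla v)$ in $C^{0,\alpha}(\bar{\Omega})$, and $H(\cdot, v_k, \nabla v_k) \to H(\cdot, v, \nabla v)$ in $L^q(\Omega)$ by pointwise convergence (via the continuity of $H(x, \cdot, \cdot)$ for a.e.\ $x$) together with an equi-integrability argument drawing on $|H| \leq |G|$ and the uniform $L^q$-bound on $G(\cdot, v_k)$ from Lemma \ref{lm3}; Theorem \ref{th1} applied to $T(v_k) - T(v)$ then promotes this to $W^{2,q}$-convergence, hence $\mathcal{B}$-convergence. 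To apply Theorem \ref{th4} in its stated form on the Banach space $\mathcal{B}$, I extend $T|_K$ globally via the continuous radial retraction $r : \mathcal{B} \to K$ (identity on $K$, $v \mapsto Mv/\|v\|_{\mathcal{B}}$ elsewhere) and set $\tilde{T} := T \circ r$. Then $\tilde{T}$ is compact and continuous, and any $u \in \mathcal{B}$ satisfying $u = \sigma \tilde{T}u$ for some $\sigma \in [0,1]$ obeys $\|u\|_{\mathcal{B}} \leq \sigma\,C(M)\,\delta_2 < M$, the required uniform a priori bound. Theorem \ref{th4} then delivers a fixed point of $\tilde{T}$, which lies in $K$ and is therefore a fixed point of $T$, hence a solution of \eqref{t5e1}.

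I expect the main obstacle to be the continuity of $T$—specifically, the $L^q(\Omega)$-convergence of the composition $H(\cdot, v_k, \nabla v_k)$. Since $H$ is only measurable in $x$, pointwise convergence must be argued via the assumed continuity of $H(x, \cdot, \cdot)$ for a.e.\ $x$, and the passage from pointwise to $L^q$-convergence has to exploit equi-integrability, ultimately traced back to the uniform $L^q$-control of $G(\cdot, v_k)$ from Lemma \ref{lm3}. A related subtlety is that the constant $C(V)$ from Theorem \ref{th1} and Lemma \ref{lm3} grows with $V$; this is handled by fixing $V = M$ once and for all and working on the ball $K$, which is why the self-map/retraction formulation of Leray--Schauder is the natural route here.
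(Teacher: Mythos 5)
Your proof follows essentially the same architecture as the paper's: solve the linearized problem via Theorem \ref{th1}, control the right-hand side via the graph-trace estimate of Lemma \ref{lm3}, work on a closed ball in a H\"older space $C^{1,\alpha}(\bar\Omega)$ with $\alpha$ strictly below the Sobolev exponent $1-n/q$ (the paper uses $\alpha=\tfrac12(1-n/q)$, you use a generic $\alpha\in(0,1-n/q)$ --- same device), and apply the fixed point theorem. Your radial retraction to extend $T|_K$ to all of $\mathcal{B}$ is a harmless technical refinement; the paper simply applies Theorem \ref{th4} on the ball $\mathcal{A}$ directly, which is a slight abuse of the stated hypotheses but equivalent in content.

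The one genuine gap is in your continuity argument. You claim $H(\cdot, v_k, \nabla v_k)\to H(\cdot,v,\nabla v)$ strongly in $L^q(\Omega)$ ``by pointwise convergence together with an equi-integrability argument drawing on $|H|\leq|G|$ and the uniform $L^q$-bound on $G(\cdot,v_k)$ from Lemma \ref{lm3}.'' A uniform $L^q$ bound combined with a.e.\ convergence does \emph{not} yield strong $L^q$ convergence (think of $k^{1/q}\mathbf{1}_{[0,1/k]}$), nor does it give equi-integrability of the $q$-th powers; it only gives equi-integrability of $|H_k|^r$ for $r<q$. So as written, the step that lets you apply Theorem \ref{th1} to $T(v_k)-T(v)$ in $W^{2,q}$ is unjustified. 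The paper sidesteps this entirely: it extracts a weakly convergent subsequence $T(v_k)\rightharpoonup w$ in $W^{2,q}$, tests the equation against $\varphi\in C_c^\infty(\Omega)$, where only $L^1$-dominated convergence of $\varphi\, H(\cdot,v_k,\nabla v_k)$ is needed (and the paper produces an explicit $L^1$ majorant $\int_\R|G_t(\cdot,t)|\,dt + |G(\cdot,v(\cdot))|$), identifies $w=T(v)$ by the fundamental lemma, and then upgrades to strong $C^{1,\alpha}$ convergence of the full sequence via compactness of $T$ and uniqueness of limits. Your route can also be salvaged without weak convergence by running the a priori estimate at an exponent $q'\in(n,q)$ chosen close enough to $q$ that $1-n/q'>\alpha$: the uniform $L^q$ bound does give equi-integrability of $|H_k|^{q'}$, hence $L^{q'}$-convergence of the right-hand sides, hence $W^{2,q'}$-convergence of $T(v_k)$, hence convergence in $C^{1,1-n/q'}\hookrightarrow C^{1,\alpha}$. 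Either fix closes the gap; but the proposal does not actually supply one.
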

\begin{proof}
	Define 
	\begin{align}
	\mathcal{A}:=\{v \in C^{1,\frac{1}{2}-\frac{n}{2q}}(\bar{\Omega})
	 ; \|v\|_{C^{1,\frac{1}{2}-
			\frac{n}{2q}(\bar{\Omega})}} \leq 1  \}.
	\end{align}
	By (\ref{t00e1}) and Lemma \ref{lm3}, $H(\cdot,v(\cdot),\nabla v(\cdot)) \in L^q(\Omega)$ for any $v \in \mathcal{A}$. Using Theorem \ref{th1}, there exist a unique function $w \in W^{2,q}(\Omega) $  and a constant $c_{\cc \label{t5c1}}>0$ which depends only on $n$, $p$, $\Omega$ and not on $v$ such that
	\begin{align}
	\begin{cases}
	L[\nabla v](w) =H(x,v,\nabla v) \quad
	\mathrm{in} \  \Omega, \\
	w-\phi \in W^{1,q}_0(\Omega), \\
	\|w\|_{W^{2,q}(\Omega)} \leq c_{\ref{t5c1}} 
	(\|G \|_{W^{1,p}(\Omega \times \R)}+ \| \phi\|_{W^{2,q}(\Omega)}). \label{t5e3}
	\end{cases}
	\end{align}
	By the Sobolev inequality and (\ref{t5e3}), we obtain
	\begin{align}
	\| w\|_{C^{1,\frac{1}{2}-\frac{n}{2q}}(\bar{\Omega})} &\leq c_{\ref{t5c2}} \| w\|_{C^{1,1-\frac{n}{q}}(\bar{\Omega})} \nonumber \\ &\leq c_{\ref{t5c3}} \| w\|_{W^{2,p}	(\Omega)}  \nonumber \\ 
	&\leq c_{\ref{t5c4}} (\|G \|_{W^{1,p}(\Omega \times \R)}+ \| \phi\|_{W^{2,q}(\Omega)}), \label{t5e4}
	\end{align}
	where $c_{\cc \label{t5c2}},c_{\cc \label{t5c3}},c_{\cc \label{t5c4}}>0$ depend only on $n$, $p$ and $\Omega$. 
	Suppose that
	\begin{align}
	\|G \|_{W^{1,p}(\Omega \times \R)}+ \| \phi\|_{W^{2,q}(\Omega)}  \leq c_{\ref{t5c4}}^{-1}=:\delta_2(n,p,\Omega). \label{t5e5}
	\end{align}
	Let a operator $T: \mathcal{A} \to \mathcal{A}$ be defined by $T(v)=w$ which satisfies (\ref{t5e3}).
	We show that $T$ is a compact and continuous mapping. For any sequence $\{v_m \}_{m \in \Na}$, we have
	$\sup_{m \in \Na} \| T(v_m) \|_{C^{1,1-\frac{n}{q}}(\bar{\Omega})} \leq c_{\ref{t5c2}}^{-1}  $ by (\ref{t5e4}, \ref{t5e5}).
	There exists a subsequence $\{T(v_k) \}_{k \in \Na} \subset    \{T(v_m) \}_{m \in \Na}$ which converges to
	a function $w_{\infty} \in C^1(\bar{\Omega})$ in the sense of $C^1(\bar{\Omega})$
	by the Ascoli-Arzel\`{a} theorem.
	We see that  $w_{\infty} \in C^{1,1-\frac{n}{q}}(\bar{\Omega})$ because
		\begin{align*}
		\frac{|\nabla {w_\infty}(x)-\nabla{w_\infty}(y)|}{|x-y|^{1-\frac{n}{q}}}
		= \lim_{k \to \infty} 
		\frac{
			|\nabla {T(v_k)}(x)-\nabla{T(v_k)}(y)|
			}{|x-y|^{1-\frac{n}{q}}} 
	\leq c_{\ref{t5c2}}^{-1}.
		\end{align*}
	 Let $\tilde{w}_k := T(v_k) -w_{\infty}$, and 
	$\tilde{w}_k$ converges to 0 in the sense of $C^1(\bar{\Omega})$.
	Then we have
	\begin{align}
	\frac{|\nabla \tilde{w}_k(x)-\nabla \tilde{w}_k(y)|}{|x-y|^{\frac{1}{2}-\frac{n}{2q}}}
	&\leq \left(\frac{|\nabla \tilde{w}_k(x)-\nabla \tilde{w}_k(y)|}{|x-y|^{1-\frac{n}{q}}} \right)^{\frac{1}{2}}
	|\nabla \tilde{w}_k(x)-\nabla \tilde{w}_k(y)|^{\frac{1}{2}} \nonumber \\
	&\leq 2c^{-\frac{1}{2}}_{\ref{t5c2}}  (2 \|\nabla \tilde{w}_k \|_{L^\infty(\Omega)})^{\frac{1}{2}} .
	\end{align}
	Hence, $\{T(v_k) \}_{k \in \Na}$ converges to a function $w_{\infty}$
	in the sense of $C^{1,\frac{1}{2}-\frac{n}{2q}}(\bar{\Omega})$, and the operator $T$ is a compact mapping.
	
		Suppose that $\{v_m \}_{m \in \Na}$ converges to $v$ in the sense of $C^{1,\frac{1}{2}-\frac{n}{2q}}(\bar{\Omega})$.\\
	 $ \sup_{m \in \Na}\|T(v_m)\|_{W^{2,q}(\Omega)}$ is bounded by (\ref{t5e4}, \ref{t5e5}). Hence, 
	there exists a subsequence $\{T(v_k) \}_{k \in \Na} \subset \{T(v_m) \}_{m \in \Na}$ which weakly converges to
	a function $w \in W^{2,q}(\Omega)$. We show $T(v)=w$, that is, 
	\[
	A_{ij}(\nabla v(x))w_{x_i x_j}(x) =H(x,v,\nabla v).
	\]
	For any $\phi \in C^\infty_0(\Omega)$, by the weak convergence and the H\"{o}lder  inequality,
	\begin{align}
	& \left|\int_{\Omega} \phi\{ A_{ij}(\nabla v)D_{ij}w -A_{ij}(\nabla v_k)D_{ij}(T(v_k)) \} \right| \nonumber \\
	\leq&\left| \int_{\Omega} \phi A_{ij}( \nabla v)(D_{ij}w - D_{ij}(T(v_k))) \right|  \nonumber \\
	& + \left| \int_{\Omega} \phi  D_{ij}(T(v_k)) (A_{ij}( \nabla v)-A_{ij}( \nabla v_k)) \right| \nonumber \\
	\leq&\left| \int_{\Omega} \phi A_{ij}( \nabla v)(D_{ij}w - D_{ij}(T(v_k))) \right|  \nonumber \\
	& +\|T(v_m)\|_{W^{2,q}(\Omega)} \|\phi 
	 (A_{ij}( \nabla v)-A_{ij}( \nabla v_k)) \|_{L^{\frac{q}{q-1}}(\Omega)} \nonumber \\
	 \to& 0 \quad (k \to \infty) . \label{t5e6}
	\end{align}
	By (\ref{t00e1}), we compute
	\begin{align}
	&|H(x,v_k(x),\nabla v_k(x))|  \nonumber \\
	 \leq& |G(x, v_k(x))-G(x,v(x))| +|G(x,v(x))| \nonumber \\
	 \leq& 	 \int_\R |G_t(x,t)| \ dt +|G(x,v(x))|.
	 \label{t5e7}
	\end{align}
	$\int_\R |G_t(\cdot,t)| \ dt +|G(\cdot,v(\cdot ))|$ is an integrable function
	by Lemma \ref{lm3} and Fubini's theorem. Since $H$ is a continuous function
	about $t$ and $z$, using the dominated convergence theorem,
	\begin{align}
	&\int_{\Omega} \phi\{H(x,v(x),\nabla v(x))-
	H(x,v_k(x),\nabla v_k(x)) \}\to 0 \quad (k \to \infty) . \label{t5e8}
	\end{align}

	By (\ref{t5e6}, \ref{t5e8}),
	\begin{align}
	&\int_{\Omega} \phi\{A_{ij}(\nabla v)D_{ij}w -H(x,v(x),\nabla v(x)) \} \nonumber \\
=&\lim_{k \to \infty} \int_{\Omega} \phi\{A_{ij}(\nabla v_k)D_{ij}(T(v_k)) -H(x,v_k(x),\nabla v_k(x)) \} \nonumber \\ =& 0.
	\end{align}
	Using the fundamental lemma of the calculus of variations,
	\begin{align*}
	&A_{ij}(x, \nabla v)D_{ij}w-H(x,v(x),\nabla v(x)) 
	=0 \quad a.e. \ x \in \Omega,
	\end{align*}
	and $T(v)=w$.
	Hence, $\{T(v_{m}) \}_{m \in \Na}$ weakly converges to $T(v)$ in $W^{2,q}(\Omega)$.
	By the compactness of $T$ and the uniqueness of limit, we can show $\{T(v_{m}) \}_{m \in \Na}$
	converges to $T(v)$ in $C^{1,\frac{1}{2}-\frac{n}{2q}}(\bar{\Omega})$, and $T$ is a continuous mapping.
	Using Theorem \ref{th4}, we obtain a function $u \in W^{2,q}(\Omega) $ satisfying 
	$u-\phi \in W^{1,q}_0(\Omega) $ and (\ref{t5e1}).
	\end{proof}

	\begin{proof}[Proof of Theorem \ref{mt2}]
		 We should show that there exists a function $\tilde{u} \in W^{2,q}(\Omega)$ 
		 such that
		\begin{align}
			A_{ij}(\nabla \tilde{u} + \nabla h) (\tilde{u}+h)_{x_i x_j} &=
			H(x,\tilde{u}+h,\nabla \tilde{u}+\nabla h) ,  \label{t6e1} \\
			\tilde{u}-\phi &\in W^{1,q}_0(\Omega) \label{t6e1b} \\
			\| \tilde{u} \|_{W^{2,q}(\Omega)} &<\e. \label{t6e1c}
		\end{align}
		Using the minimal surface equation (\ref{t6e1a}) for $h$, we convert (\ref{t6e1}) as
		\begin{align}
		&A_{ij}(\nabla \tilde{u} + \nabla h)\tilde{u}_{x_i x_j}  +
		\frac{h_{x_i x_j}}{(1+|\nabla \tilde{u} + \nabla h|^2)^{\frac{3}{2}}}
		((|\nabla \tilde{u}|^2 +\nabla \tilde{u} \cdot \nabla h)\delta_{ij} \nonumber \\&
		- \tilde{u}_{x_i}\tilde{u}_{x_j}-\tilde{u}_{x_i}h_{x_j}
		-\tilde{u}_{x_j}h_{x_i}) \nonumber \\
		=&H(x,\tilde{u}+h,\nabla \tilde{u}+\nabla h) .
		\end{align}
		Define
		\begin{align}
		\mathcal{A}:=\{v \in C^{1,\frac{1}{2}-\frac{n}{2q}}(\bar{\Omega}) ; \|v\|_{C^{1,\frac{1}{2}-\frac{n}{2q}(\bar{\Omega})}} \leq \e  \}.
		\end{align}
Using \cite[Theorem 9.15]{G83}, for any $v \in \mathcal{A}$, there exists a unique function $w \in W^{2,q}(\Omega) $ such that $w-\phi \in W^{1,q}_0(\Omega)$
 and
\begin{align}
&A_{ij}(\nabla v + \nabla h)w_{x_i x_j}  +
\frac{h_{x_i x_j}}{(1+|\nabla v + \nabla h|^2)^{\frac{3}{2}}}
((\nabla v\cdot \nabla w +\nabla w \cdot \nabla h)\delta_{ij} \nonumber \\&
- v_{x_i}w_{x_j}-w_{x_i}h_{x_j}
-w_{x_j}h_{x_i}) \nonumber \\
=&H(x,v+h,\nabla v+\nabla h).\label{t6e3}
\end{align}
Define
\begin{align*}
B(\nabla v) \cdot \nabla w:=&\frac{h_{x_i x_j}}{(1+|\nabla v + \nabla h|^2)^{\frac{3}{2}}}
((\nabla v\cdot \nabla w +\nabla w \cdot \nabla h)\delta_{ij} \\
&- v_{x_i}w_{x_j}-w_{x_i}h_{x_j}
-w_{x_j}h_{x_i}).
\end{align*}
Here, $B:\R^n \to \R^n$ is a continuous function.
By Lemma \ref{lm3}, a similar argument of Theorem \ref{th1} and the Sobolev inequality,
there exists a constant $c_{\cc \label{t6c2}}>0$ which depends only on $n$, $p$, $\Omega$, $\e$ and $\| h\|_{W^{2, \infty}	(\Omega)} $ such that
\begin{align}
\| w\|_{C^{1,\frac{1}{2}-\frac{n}{2q}(\bar{\Omega})}} \leq c_{\ref{t6c2}}(\|G \|_{W^{1,p}(\Omega \times \R)}+ \| \phi\|_{W^{2,q}(\Omega)}) . \label{t6e4}
\end{align}
Suppose that we have
\begin{align}
	\|G \|_{W^{1,p}(\Omega \times \R)}+ \| \phi\|_{W^{2,q}(\Omega)}  \leq c_{\ref{t6c2}}^{-1} \e:= \delta_1 . \label{t6e5}
\end{align}
Let a operator $T: \mathcal{A} \to \mathcal{A}$ be defined by $T(v)=w$ which satisfies
$w-\phi \in W^{1,q}_0(\Omega)$ and (\ref{t6e3}).
The compactness of $T$ can be proved by the argument of Theorem \ref{mt1}.

Suppose that $\{v_m \}_{m \in \Na} \subset \mathcal{A} $ converges to $v$ in the sense of $C^{1,\frac{1}{2}-\frac{n}{2q}}(\bar{\Omega})$.
Then there exists a subsequence $\{T(v_k) \}_{k \in \Na} \subset \{T(v_m) \}_{m \in \Na}$ which weakly converges to
a function $w \in W^{2,q}(\Omega)$. For any $\phi \in C^\infty_0(\Omega)$, 
\begin{align}
&\int_{\Omega} \phi\{ B(\nabla v) \cdot \nabla w -B(\nabla v_k) \cdot \nabla T(v_k) \} \nonumber \\
=& \int_{\Omega} \phi B(\nabla v) \cdot(\nabla w - \nabla(T(v_k)))  \nonumber \\
&+ \int_{\Omega} \phi  \nabla(T(v_k)) \cdot (B(\nabla v)-B(\nabla v_k)) \nonumber \\
 \to& 0 \quad (k \to \infty) , \label{t6e6}
\end{align}
since $B$ is a continuous function and $T(v_k) $ converges weakly to $w$.
By (\ref{t6e6}) and the argument of Theorem \ref{mt1}, we can show that $T$ is 
a continuous mapping. 
Using Theorem \ref{th4}, we obtain a function $\tilde{u} \in W^{2,q}(\Omega) $ satisfying (\ref{t6e1}, \ref{t6e1b}). Moreover, $\tilde{u}$ satisfies (\ref{t6e1c}) by (\ref{t6e4}, 
\ref{t6e5}). Define $u:=\tilde{u}+h$. Then $u$ satisfies $u-h-\phi \in
W^{1,q}_0(\Omega)$ and (\ref{t6e1d}, \ref{t6e1e}), and the proof is complete.
\end{proof}
	
	\begin{cor} \label{maincor}
		Suppose that we are given
		$f =(f_1, \cdots,f_{n+1})\in W^{1,p}(\Omega \times \R ;\R^{n+1} )$ where $\frac{n+1}{2}<p< n+1$ and
		$\phi \in W^{2,q}(\Omega)$ where $q=\frac{np}{n+1-p}$.
			Let $\e>0$ be arbitrary. Suppose $h \in W^{2,\infty}(\Omega)$ satisfies
			the minimal surface equation, that is,
			\begin{align}
				\di\left( \frac{\nabla h}{\sqrt{1+|\nabla h|^2}} \right) =0.
			\end{align}
       Let $\delta_1>0$ be the constant as in Theorem \ref{mt2}.
		If
		\begin{align}
			\sum_{i=1}^{n+1} \|f_i\|_{W^{1,p}(\Omega \times \R )} +\|\phi \|_{W^{2,q}(\Omega)} \leq \delta_1,
		\end{align}
		then there exists a function $u \in W^{2,q}(\Omega)$ 
		such that $ u-h-\phi \in W^{1,q}_0(\Omega)$ and
		\begin{align}
			\di\left( \frac{\nabla u}{\sqrt{1+|\nabla u|^2}} \right) &=
			\nu(\nabla u(x))\cdot f(x,u(x))  \quad
			\mathrm{in} \  \Omega, \\
\|u-h\|_{W^{2,q}(\Omega)}&<\e.
		\end{align}
	\end{cor}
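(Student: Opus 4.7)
The plan is to deduce Corollary \ref{maincor} as an essentially immediate application of Theorem \ref{mt2}, with the scalar majorant $G(x,t) := \sum_{i=1}^{n+1}|f_i(x,t)|$ and the prescription $H(x,t,z) := \nu(z) \cdot f(x,t)$. The first step is the pointwise comparison: since $|\nu(z)| = 1$ for every $z \in \R^n$, the Cauchy--Schwarz inequality gives
\[
|H(x,t,z)| = |\nu(z) \cdot f(x,t)| \leq |f(x,t)| \leq \sum_{i=1}^{n+1}|f_i(x,t)| = G(x,t),
\]
which is exactly the majorization hypothesis required by Theorem \ref{mt2}.

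Next I would verify the Sobolev norm estimate on $G$. Using $|\nabla |f_i|| \leq |\nabla f_i|$ almost everywhere (the standard Stampacchia chain rule for Sobolev functions) together with the triangle inequality,
\[
\|G\|_{W^{1,p}(\Omega \times \R)} \leq \sum_{i=1}^{n+1}\|f_i\|_{W^{1,p}(\Omega \times \R)},
\]
so the hypothesis of the corollary forces $\|G\|_{W^{1,p}(\Omega \times \R)} + \|\phi\|_{W^{2,q}(\Omega)} \leq \delta_1$, matching the smallness condition \eqref{qe2}.

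The only subtle point is checking that $H(x,\cdot,\cdot)$ is continuous for almost every $x \in \Omega$. Since $p > \tfrac{n+1}{2} > 1$ and $f \in W^{1,p}(\Omega \times \R; \R^{n+1})$, Fubini's theorem yields $f_i(x,\cdot) \in W^{1,p}(\R)$ for almost every $x$, and the one-dimensional Sobolev embedding $W^{1,p}(\R) \hookrightarrow C^0(\R)$ lets me select the absolutely continuous representative in the $t$-variable so that $f(x,\cdot)$ is continuous on $\R$ for a.e.~$x$. Combined with the smoothness of $\nu$ on $\R^n$, this shows $H$ is Carath\'eodory: measurable in $x$ and continuous in $(t,z)$. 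With all the hypotheses of Theorem \ref{mt2} verified, invoking that theorem produces $u \in W^{2,q}(\Omega)$ with $u - h - \phi \in W^{1,q}_0(\Omega)$ solving the prescribed mean curvature equation for this $H$ and satisfying $\|u-h\|_{W^{2,q}(\Omega)} < \e$, which is the conclusion of the corollary.
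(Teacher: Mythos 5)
Your proof is correct and follows essentially the same route as the paper: the same choices $G = \sum_i |f_i|$ and $H = \nu(z)\cdot f(x,t)$, the same reduction to Theorem \ref{mt2} via the Minkowski/triangle-inequality estimate $\|G\|_{W^{1,p}}\le\sum_i\|f_i\|_{W^{1,p}}$, and the same continuity-in-$(t,z)$ check using absolute continuity of $f(x,\cdot)$ for a.e.\ $x$. Your explicit mention of $|\nu(z)|=1$, the Stampacchia chain rule for $|\nabla|f_i||$, and the one-dimensional Sobolev embedding merely spell out details the paper leaves implicit.
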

	\begin{proof}
		Define
		\[
		H(x,t,z):= \nu(z)\cdot f(x,t).
		\]
		By $f \in W^{1,p}(\Omega \times \R ;\R^{n+1} )$
		, for a.e. $x \in \Omega$, $f(x,\cdot)$ is an
		absolutely continuous function. Hence $H(x,\cdot,\cdot)$ is a continuous function for almost every $x \in \Omega$. We have
		\[
		|H(x,t,z)| \leq \sum_{i=1}^{n+1} |f_i(x,t)| \quad \mathrm{for} \   a.e. \ x \in \Omega, \ 
		\mathrm{any} \  (t,z) \in \R \times \R^{n},
		\]
		and $\sum_{i=1}^{n+1} |f_i(x,t)| \in W^{1,p}(\Omega \times \R)$.
		By the Minkowski inequality,
		\[
		\|\sum_{i=1}^{n+1} |f_i(x,t)| \|_{W^{1,p}(\Omega \times \R)}
		\leq \sum_{i=1}^{n+1} \|f_i\|_{W^{1,p}(\Omega \times \R )}.
		\]
		Define
		\[
		G(x,t):=\sum_{i=1}^{n+1} |f_i(x,t)|.
		\]
		Then $H$ and $G$ satisfy the assumption of Theorem \ref{it2},
		 and this corollary follows.
	\end{proof}

\begin{rem}
	The uniqueness of solutions follows immediately using \cite[Theorem 10.2]{G83}.
	Under the assumptions of Theorem \ref{mt2},
	if we additionally assume that
	$H$ is non-increasing 
	in $t$ for each $(x,z) \in \Omega \times \R^n$ and
	continuously differentiable with respect to the $z$ variables 
	in $\Omega \times \R \times \R^n$, then the solution is unique in $W^{2,q}(\Omega)$.
\end{rem}

\end{document}